\newtheorem{theorem}{Theorem}[section]
\newtheorem{proposition}{Proposition}[section]
\begin{document}

\begin{center}
{\Large  A Liouville theorem for an integral equation of the Ginzburg-Landau type}
\end{center}

\vskip 5mm

\begin{center}
{\sc Yutian Lei \quad and \quad Xin Xu} \\

\end{center}

\vskip 5mm {\leftskip5mm\rightskip5mm \normalsize
\noindent{\bf{Abstract}}
In this paper, we are concerned with a Liouville-type
result of the nonlinear integral equation
\begin{equation*}
u(x)=\overrightarrow{l}+C_*\int_{\mathbb{R}^{n}}\frac{u(1-|u|^{2})}{|x-y|^{n-\alpha}}dy.
\end{equation*}
Here $u: \mathbb{R}^{n} \to \mathbb{R}^{k}$ is a bounded, uniformly continuous and
differentiable function with $k \geq 1$ and $1<\alpha<n$,
$\overrightarrow{l} \in \mathbb{R}^{k}$ is a constant vector, and $C_*$ is a real constant.
If $u$ is the finite energy solution,
we prove that $|\overrightarrow{l}| \in \{0,1\}$. Furthermore, we also give a Liouville
type theorem (i.e., $u \equiv \overrightarrow{l}$).

\par
\noindent{\bf{Keywords}}: Ginzburg-Landau equation, Liouville theorem,
Riesz potential
\par
{\bf{MSC2020}}: 45G05, 45E10, 35Q56, 35R11}

\renewcommand{\theequation}{\thesection.\arabic{equation}}
\catcode`@=11
\@addtoreset{equation}{section}
\catcode`@=12

\vskip 1cm
\section{Introduction}  

We first recall several Liouville theorems.
If a harmonic function $u$ is bounded on $\mathbb{R}^n$, then $u \equiv Constant$.
When $\alpha \in (0,2)$, $u$ is a bounded function satisfying
$(-\Delta)^{\frac{\alpha}{2}}u=0$ on $\mathbb{R}^n$,
then $u \equiv Constant$ (cf. \cite{BKN}, \cite{ZCCY}).

In 1994, Brezis, Merle and Rivi\`{e}re \cite{BMR} studied the
quantization effects of the following equation
\begin{equation}\label{1.5}
-\Delta u=u(1-|u|^2) \quad on \ \mathbb{R}^2.
\end{equation}
Here $u:\mathbb{R}^2 \to \mathbb{R}^2$ is a vector valued function.
It is the Euler-Lagrange equation of the Ginzburg-Landau energy
$$
E_{GL}(u)=\frac{1}{2}\|\nabla
u\|_{L^2(\mathbb{R}^2)}^2+\frac{1}{4}\|1-|u|^2\|_{L^2(\mathbb{R}^2)}^2.
$$
In particular,
they proved that the finite energy solution (i.e., $u$ satisfies $\nabla
u \in L^2(\mathbb{R}^2)$) is bounded (see also \cite{HH})
$$
|u| \leq 1 \quad on \ \mathbb{R}^2.
$$
Based on this result, they used the Pohozaev identity
to obtain a Liouville type theorem for finite
energy solutions
\begin{proposition} \label{th1.1} (Theorem 2 in \cite{BMR})
Let $u:\mathbb{R}^2 \to \mathbb{R}^2$ be a classical solution of (\ref{1.5}).
If $\nabla u \in L^2(\mathbb{R}^2)$, then either $u \in L^2(\mathbb{R}^2)$ which implies
$u \equiv \overrightarrow{0}$, or $1-|u|^2 \in L^1(\mathbb{R}^2)$ which implies $u \equiv \overrightarrow{C}$ with
$|\overrightarrow{C}|=1$.
\end{proposition}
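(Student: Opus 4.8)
The plan is to base everything on the Pohozaev identity on balls $B_R=\{|x|<R\}$, which in dimension two has a special structure. Multiplying the equation $-\Delta u=u(1-|u|^2)$ by $x\cdot\nabla u$ and integrating over $B_R$, the bulk Dirichlet term carries the factor $1-\tfrac{n}{2}$ and therefore vanishes when $n=2$; writing $W(u)=\tfrac14(1-|u|^2)^2$ so that $u(1-|u|^2)=-\nabla_u W$, the potential contributes $-R\int_{\partial B_R}W+2\int_{B_R}W$. Collecting the boundary terms (and using $x\cdot\nabla u=R\,\partial_\nu u$ on $\partial B_R$) I would record the identity
\[
\frac12\int_{B_R}(1-|u|^2)^2\,dx
=R\int_{\partial B_R}\Big(\frac12|\nabla u|^2-|\partial_\nu u|^2+\frac14(1-|u|^2)^2\Big)\,ds .
\]
All integrations by parts are legitimate because $u$ is a bounded classical solution, so $u\in C^2$ with $|u|\le1$.

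The finite-energy hypothesis enters only through a choice of good radii: since $\int_{\mathbb{R}^2}|\nabla u|^2=\int_0^\infty\big(\int_{\partial B_R}|\nabla u|^2\,ds\big)\,dR<\infty$, one has $\liminf_{R\to\infty}R\int_{\partial B_R}|\nabla u|^2\,ds=0$, and the same liminf argument applies to any further $L^1$ density added to $|\nabla u|^2$. In the case $1-|u|^2\in L^1(\mathbb{R}^2)$, the bound $0\le1-|u|^2\le1$ gives $(1-|u|^2)^2\le1-|u|^2$, so $(1-|u|^2)^2\in L^1$ as well; choosing $R_j\to\infty$ along which $R_j\int_{\partial B_{R_j}}\big(|\nabla u|^2+(1-|u|^2)^2\big)\,ds\to0$ makes the entire right-hand side above tend to $0$, whence $\int_{\mathbb{R}^2}(1-|u|^2)^2=0$ and $|u|\equiv1$. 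Then the equation reduces to $-\Delta u=0$, so each component of $u$ is a bounded harmonic function on $\mathbb{R}^2$; the classical Liouville theorem forces $u\equiv\overrightarrow{C}$ with $|\overrightarrow{C}|=1$.

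The case $u\in L^2(\mathbb{R}^2)$ is the delicate one, because there $1-|u|^2\to1$ and both sides of the identity diverge like $R^2$, so the bare identity is vacuous. My remedy is to renormalize by subtracting the constant $1$ inside the potential before passing to the limit. Since $(1-|u|^2)^2-1=-|u|^2(2-|u|^2)$ and $0\le|u|^2(2-|u|^2)\le2|u|^2\in L^1$, the quantity $\int_{B_R}\big((1-|u|^2)^2-1\big)\,dx$ converges to $-\int_{\mathbb{R}^2}|u|^2(2-|u|^2)$, and the identity becomes
\[
\frac12\int_{B_R}\big((1-|u|^2)^2-1\big)\,dx
=R\int_{\partial B_R}\Big(\frac12|\nabla u|^2-|\partial_\nu u|^2\Big)\,ds
+\frac{R}{4}\int_{\partial B_R}\big((1-|u|^2)^2-1\big)\,ds .
\]
Choosing $R_j\to\infty$ with $R_j\int_{\partial B_{R_j}}\big(|\nabla u|^2+|u|^2\big)\,ds\to0$ (possible since $|\nabla u|^2+|u|^2\in L^1$), both boundary integrals vanish in the limit, leaving $\int_{\mathbb{R}^2}|u|^2(2-|u|^2)=0$. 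As the integrand is nonnegative and $2-|u|^2\ge1$, this yields $u\equiv\overrightarrow{0}$.

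The main obstacle is thus not the algebra of the Pohozaev identity but two analytic points: first, recognizing that in the $L^2$ branch the identity must be renormalized — this is exactly where the boundedness $|u|\le1$ is indispensable, since it is what keeps $\int_{\mathbb{R}^2}\big((1-|u|^2)^2-1\big)$ convergent; and second, justifying the passage to the limit, which forces one to work along a sequence of radii on which the spherical energies decay faster than $1/R$ rather than on all radii. Both are handled by the liminf selection above, so that no compactness or decay-at-infinity input beyond $\nabla u\in L^2$ (together with the relevant integrability in each branch) is actually required.
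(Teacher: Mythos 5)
The paper itself does not prove Proposition \ref{th1.1}; it is quoted from \cite{BMR}, so there is no in-paper argument to compare line by line. Measured against the statement, your two Pohozaev computations are correct: the identity
$\tfrac12\int_{B_R}(1-|u|^2)^2\,dx=R\int_{\partial B_R}\bigl(\tfrac12|\nabla u|^2-|\partial_\nu u|^2+\tfrac14(1-|u|^2)^2\bigr)\,ds$
is right in dimension two (the bulk Dirichlet term does cancel), the liminf selection of good radii from an $L^1$ density is sound, and the renormalization $(1-|u|^2)^2-1=-|u|^2(2-|u|^2)$ in the $L^2$ branch is a clean way to make the identity non-vacuous there; the exact cancellation of $\tfrac12|B_R|$ against $\tfrac{R}{4}|\partial B_R|$ checks out. (You do invoke $|u|\le 1$ throughout; that is itself a nontrivial prior result of \cite{BMR}, which the paper records just before the proposition, so using it is legitimate but should be flagged as an input rather than as a consequence of ``bounded classical solution.'')

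The genuine gap is that you prove only the two conditional implications and never the dichotomy, which is part of the assertion: ``either $u\in L^2$ \dots or $1-|u|^2\in L^1$'' claims that \emph{at least one} of the two integrability conditions holds for every finite-energy solution, and your proof is silent on why the cases are exhaustive. This is not automatic and requires a separate argument: from $\nabla u\in L^2$ one first derives $\int_{\mathbb{R}^2}|u|^2(1-|u|^2)\,dx=\|\nabla u\|_{L^2}^2<\infty$ (the identity \eqref{sf} in Remark 1.2), and then shows that the intermediate set $\{\,\tfrac14\le|u|\le\tfrac34\,\}$ is bounded --- otherwise, by uniform continuity of $u$ (available since $u$ and $\nabla u$ are bounded), one finds infinitely many disjoint balls on which $|u|^2(1-|u|^2)$ is bounded below, contradicting \eqref{cond2}. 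Since the complement of a large disk is connected, either $|u|\le\tfrac14$ there (giving $u\in L^2$) or $|u|\ge\tfrac34$ there (giving $1-|u|^2\in L^1$). This is exactly the argument the paper runs in Theorem \ref{th2.1} for the integral equation, and it is the missing first half of the proposition; without it your proof establishes the Liouville conclusions only under an unverified hypothesis.
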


In addition, for the integral equation
\begin{equation}\label{1.1}
u(x)=\int_{\mathbb{R}^n}\frac{u(1-|u|^2)}{|x-y|^{n-\alpha}}dy,
\end{equation}
there holds the following Liouville theorem.
\begin{proposition} \label{th1.2} (Theorem 1 in \cite{LCL})
Assume that $u:\mathbb{R}^n \to \mathbb{R}^k$ is bounded and differentiable, and solves (\ref{1.1})
with $k \geq 1$ and $\alpha \in (1,n/2)$. If $u \in L^2(\mathbb{R}^n)$, then $u(x) \equiv \overrightarrow{0}$.
\end{proposition}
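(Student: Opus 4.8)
The plan is to establish a Pohozaev-type identity for (\ref{1.1}) by computing the single quantity
\[
J:=\int_{\mathbb{R}^n}\big(x\cdot\nabla u(x)\big)\cdot u(x)\big(1-|u(x)|^2\big)\,dx
\]
in two different ways and then reading off $u\equiv\overrightarrow{0}$ from a sign comparison. Write $v:=u(1-|u|^2)$, so that (\ref{1.1}) becomes $u(x)=\int_{\mathbb{R}^n}|x-y|^{\alpha-n}v(y)\,dy$, and note that $F(u):=\tfrac12|u|^2-\tfrac14|u|^4$ satisfies $\nabla_u F=v$. First I would record the integrability facts that make the integrals below finite: since $u$ is bounded, $|v|\le(1+\|u\|_\infty^2)|u|$, so $u\in L^2(\mathbb{R}^n)$ forces $v\in L^2(\mathbb{R}^n)$, $F(u)\in L^1(\mathbb{R}^n)$, and (again by boundedness) $A:=\int_{\mathbb{R}^n}|u|^2dx$ and $B:=\int_{\mathbb{R}^n}|u|^4dx$ are both finite.

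For the first evaluation, the integrand of $J$ equals $x\cdot\nabla_x\big[F(u(x))\big]$ by the chain rule, because $(x\cdot\nabla u)\cdot\nabla_u F=x\cdot\nabla(F\circ u)$. Integrating by parts over a ball $B_R$ and letting $R\to\infty$ along a suitable subsequence, the divergence-theorem argument gives
\[
J=-n\int_{\mathbb{R}^n}F(u)\,dx=-n\Big(\tfrac12A-\tfrac14B\Big).
\]

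For the second evaluation, I would differentiate the representation to get $\nabla u(x)=-(n-\alpha)\int_{\mathbb{R}^n}|x-y|^{\alpha-n-2}(x-y)\,v(y)\,dy$, substitute into $J$, and symmetrize in $x\leftrightarrow y$. Using $x\cdot(x-y)+y\cdot(y-x)=|x-y|^2$, the weight collapses and one finds
\[
J=-\frac{n-\alpha}{2}\int_{\mathbb{R}^n}\int_{\mathbb{R}^n}\frac{v(x)\cdot v(y)}{|x-y|^{n-\alpha}}\,dy\,dx=-\frac{n-\alpha}{2}\int_{\mathbb{R}^n}u\cdot v\,dx=-\frac{n-\alpha}{2}\big(A-B\big).
\]
Equating the two expressions for $J$ yields, after simplification, $\tfrac{\alpha}{2}A+\tfrac{n-2\alpha}{4}B=0$. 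Since $\alpha>0$ and $n-2\alpha>0$ --- this is exactly where $\alpha<n/2$ is used --- while $A,B\ge0$, both terms must vanish; in particular $A=\int_{\mathbb{R}^n}|u|^2dx=0$, so $u\equiv\overrightarrow{0}$.

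The algebra is short, so the real work --- and the step I expect to be the main obstacle --- is legitimizing the two formal computations. The integration by parts needs the spherical flux to vanish along some $R_j\to\infty$; this follows from $F(u)\in L^1$, since $\int_0^\infty\big(\int_{\partial B_R}|F(u)|\,dS\big)dR<\infty$ forces $\liminf_{R\to\infty}R\int_{\partial B_R}|F(u)|\,dS=0$. More delicate is the second evaluation, where Fubini and the symmetrization require $\int_{\mathbb{R}^n}\int_{\mathbb{R}^n}|x-y|^{\alpha-n}|v(x)||v(y)|\,dy\,dx<\infty$; by Hardy--Littlewood--Sobolev this needs $v\in L^{2n/(n+\alpha)}(\mathbb{R}^n)$, and since $2n/(n+\alpha)<2$ it does \emph{not} follow from $v\in L^2\cap L^\infty$ alone. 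I therefore expect the genuine content to be a decay estimate showing that $u$, hence $v$, lies in $L^p$ for some $p<2$ --- for instance $|u(x)|\lesssim(1+|x|)^{-\gamma}$ with $\gamma$ large enough --- obtained by bootstrapping the integral representation against the $L^2\cap L^\infty$ bound (the representation also supplies the regularity needed to differentiate under the integral). Once such decay is available, every interchange above is justified and the identity closes the argument.
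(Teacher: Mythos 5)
Note first that the paper does not prove this proposition itself --- it is quoted from \cite{LCL}, and the closest argument in the paper is the Pohozaev computation in the proof of Theorem \ref{th1.4}. Your strategy, a Pohozaev identity obtained by evaluating $\int_{\mathbb{R}^n}(x\cdot\nabla u)\cdot u(1-|u|^2)\,dx$ in two ways, is the right one, and your algebra is correct: the identity $\frac{\alpha}{2}\int|u|^2\,dx+\frac{n-2\alpha}{4}\int|u|^4\,dx=0$ does force $u\equiv\overrightarrow{0}$ precisely because $0<\alpha<n/2$. The genuine gap is the one you flag and then leave open: the justification of the second evaluation. The situation is in fact worse than your Hardy--Littlewood--Sobolev count suggests, because the symmetrization is applied not to $\iint |x-y|^{\alpha-n}v(x)\cdot v(y)\,dx\,dy$ but to $\iint x\cdot(x-y)\,|x-y|^{\alpha-n-2}\,v(y)\cdot v(x)\,dy\,dx$, whose kernel carries an extra factor growing like $|x|$; absolute convergence of this double integral is the real obstacle, and $v\in L^2\cap L^\infty$ yields neither it nor even $v\in L^{2n/(n+\alpha)}$. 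The hoped-for decay is not a routine bootstrap either: applying HLS to $u=I_\alpha * v$ with $v\in L^2$ \emph{raises} the exponent to $2n/(n-2\alpha)$, it does not lower it. You also never locate where $\alpha>1$ enters; it is needed already to differentiate under the integral sign, since $|x-y|^{\alpha-n-1}$ is locally integrable near $y=x$ only when $\alpha>1$. A proof that ends with ``I expect the genuine content to be a decay estimate'' has not supplied that content.

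For contrast, the paper's own Pohozaev argument (for (Rt-iv)) organizes the same computation so as to avoid differentiating the kernel directly: it differentiates the scaling relation $u(\lambda x)=\overrightarrow{l}+C_*\lambda^{\alpha}\int|x-z|^{\alpha-n}u(\lambda z)(1-|u(\lambda z)|^{2})\,dz$ in $\lambda$ at $\lambda=1$ to get $x\cdot\nabla u=\alpha(u-\overrightarrow{l})+C_*\int|x-z|^{\alpha-n}\,z\cdot\nabla[u(1-|u|^{2})]\,dz$, and then devotes a separate step to proving that this inner improper integral converges, by integrating by parts on balls $B_R$ and controlling the resulting term $\int|x-z|^{\alpha-n-2}(x-z)\cdot z\;u(1-|u|^{2})\,dz$ using the nonlinearity's membership in $L^{s}$ for suitable $s$ near infinity and $\alpha>1$ near $z=x$; only afterwards does it multiply by $u(1-|u|^{2})$, integrate, and invoke Fubini on the now-controlled iterated integral. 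To complete your version you must either carry out the analogous convergence lemma in the $L^2$ setting (which is where the content of \cite{LCL} lies) or reorganize along these lines; as written, the decisive analytic step is missing.
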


For the Lane-Emden type integral equations with the critical case,
the finite energy solutions can be classified (cf. \cite{CLO}).
In this paper, we consider finite energy solutions of the integral
equation involving the Riesz potential
\begin{equation}\label{ie}
u(x)=\overrightarrow{l}+C_*\int_{\mathbb{R}^{n}}\frac{u(1-|u|^{2})}{|x-y|^{n-\alpha}}dy,
\end{equation}
where $u: \mathbb{R}^{n} \to \mathbb{R}^{k}$ with $k \geq 1$ and $0<\alpha<n$,
$\overrightarrow{l} \in \mathbb{R}^{k}$ is a constant vector,
and $C_* \in \mathbb{R}$ is a constant.

Eq. (\ref{ie}) is associated with the fractional Ginzburg-Landau equation (cf. \cite{MS}, \cite{MR}
and \cite{TZ})
\begin{equation}\label{1.2}
(-\Delta)^{\frac{\alpha}{2}} u=(1-|u|^{2})u  \quad on \ \mathbb{R}^n.
\end{equation}
Here $u=(u_1,u_2,\cdots,u_k):\mathbb{R}^n \to \mathbb{R}^k$.
Write $u_i^+=\max\{u_i,0\}$ and $u_i^-=-\min\{u_i,0\}$. Then
$u_i^+,u_i^- \geq 0$ and $u_i=u_i^+-u_i^-$.

When $\alpha \geq 2$ is an even number, Theorem 3.21 in \cite{HK}
(see also Theorem 2.4 in \cite{CDM}) shows that if $u$ solves (\ref{1.2}),
there exist two constants $l_i^+,l_i^- \in \mathbb{R}$ such that
\begin{equation}\label{f2}
u_i^+(x)=l_i^+ +C_\alpha\int_{\mathbb{R}^n}\frac{u_i^+(y)(1-|u(y)|^2)}{|x-y|^{n-\alpha}}dy,
\end{equation}
$$
u_i^-(x)=l_i^- +C_\alpha\int_{\mathbb{R}^n}\frac{u_i^-(y)(1-|u(y)|^2)}{|x-y|^{n-\alpha}}dy.
$$
Here
$
C_\alpha=\Gamma(\frac{n-\alpha}{2})[2^\alpha \pi^{n/2} \Gamma(\alpha/2)]^{-1}.
$
Denote $l_i^+-l_i^-$ by $l_i$, then there holds
$$
u_i(x)=l_i +C_\alpha\int_{\mathbb{R}^n}\frac{u_i(y)(1-|u(y)|^2)}{|x-y|^{n-\alpha}}dy,
$$
which implies that $u$ also solves (\ref{ie}), where
$\overrightarrow{l}=(l_1,l_2,\cdots,l_k)$. For the Lane-Emden
equation, Chen, Li and Ou obtained an analogous result (cf.
\cite{CLO}).

When $\alpha \in (0,2)$, if $u$ solves (\ref{1.2}) and $|u| \leq 1$ on $\mathbb{R}^n$, then
\begin{equation}\label{f1}
(-\Delta)^{\frac{\alpha}{2}} u_i^+=(1-|u|^{2})u_i^+  \quad on \ \mathbb{R}^n.
\end{equation}
On the other hand, $C_\alpha|x|^{\alpha-n}$ is a fundamental solution of
$(-\Delta)^{\frac{\alpha}{2}}u=0$ (cf. Chapter 5 in \cite{Stein}), i.e.,
$$
(-\Delta)^{\frac{\alpha}{2}}(C_\alpha|x|^{\alpha-n})=\delta_o,
$$
where $\delta_o$ is the Dirac mass at the origin $o$. Therefore,
on $\mathbb{R}^n$ we have
$$\begin{array}{ll}
&(-\Delta)^{\frac{\alpha}{2}}[C_\alpha|x|^{\alpha-n}*(u_i^+(1-|u|^2))]\\[3mm]
&=\delta_x *[u_i^+(1-|u|^2)]=u_i^+(x)(1-|u(x)|^2).
\end{array}
$$
Combining with (\ref{f1}) yields an $\alpha$-harmonic equation
$$
(-\Delta)^{\frac{\alpha}{2}}[u_i^+-C_\alpha|x|^{\alpha-n}*(u_i^+(1-|u|^2))]=0
  \quad on \ \mathbb{R}^n.
$$
Since $u_i^+ \geq 0$ and $u$ is bounded, we can see that the $\alpha$-harmonic function
$$
u_i^+-C_\alpha|x|^{\alpha-n}*(u_i^+(1-|u|^2))
$$
has an upper bound. And hence it is
a constant (cf. \cite{BKN}, \cite{ZCCY}), which is denoted by $l_i^+$. Thus,
(\ref{f2}) is also true. By the same argument above, we also see that $u$ solves (\ref{ie}).

In this paper, we
expect to obtain the analogous results in Propositions \ref{th1.1} and \ref{th1.2}
for the finite energy solutions of (\ref{ie}).

First, if the finite energy solution $u$ is bounded, we determine
the value of $|\overrightarrow{l}|$.

\begin{theorem} \label{th1.3}
Assume that a uniformly continuous function $u:\mathbb{R}^n \to
\mathbb{R}^k$ solves (\ref{ie}) with $k \geq 1$ and $n \geq 2$. If
\begin{equation}\label{cond1}
|u| \leq 1 \quad on \ \mathbb{R}^{n};
\end{equation}
and
\begin{equation}\label{cond2}
\int_{\mathbb{R}^{n}}|u|^2(1-|u|^2) dx <\infty.
\end{equation}
Then, one of the following results holds true

(Rt-i)  $u \in L^2(\mathbb{R}^{n})$ and $|\overrightarrow{l}|=0$ if $\alpha \in (0,n/2)$;

(Rt-ii)  $1-|u|^2 \in L^1(\mathbb{R}^{n})$ and $|\overrightarrow{l}|=1$ if $\alpha \in (0,n)$.
\end{theorem}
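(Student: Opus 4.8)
The plan is to first extract pointwise decay of the nonlinearity $g:=u(1-|u|^2)$ at infinity, then to separate the two integrability regimes by a connectedness argument, and finally to identify $\overrightarrow{l}$ in each regime by feeding the source term into the Hardy--Littlewood--Sobolev inequality. Two elementary facts about $g$ drive everything. Since the map $z\mapsto z(1-|z|^2)$ is Lipschitz on $\{|z|\le1\}$, condition (\ref{cond1}) shows that $g$ inherits uniform continuity from $u$ and satisfies $|g|\le1$; and since $|g|^2=|u|^2(1-|u|^2)^2\le|u|^2(1-|u|^2)$, condition (\ref{cond2}) gives $g\in L^2(\mathbb{R}^n)$. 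As a uniformly continuous $L^2$ function must vanish at infinity, I obtain $g(x)\to0$ as $|x|\to\infty$.

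Next I would use that the only zeros of $t\mapsto t(1-t^2)$ on $[0,1]$ are $t=0$ and $t=1$, while $t(1-t^2)$ is bounded below by a positive constant on $[1/4,3/4]$. Because $g(x)\to0$, for $R$ large the continuous function $|u|$ avoids $[1/4,3/4]$ on the exterior domain $\{|x|>R\}$, which is connected precisely because $n\ge2$; hence either $|u|<1/4$ throughout $\{|x|>R\}$ or $|u|>3/4$ throughout it. In the first case $1-|u|^2>1/2$ there, so $|u|^2\le2|u|^2(1-|u|^2)$ and (\ref{cond2}) yields $u\in L^2(\mathbb{R}^n)$; in the second case $|u|^2>1/2$ there, so $1-|u|^2\le2|u|^2(1-|u|^2)$ and (\ref{cond2}) yields $1-|u|^2\in L^1(\mathbb{R}^n)$. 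This is the dichotomy between (Rt-i) and (Rt-ii).

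It remains to pin down $\overrightarrow{l}$, and for this I would show that the potential $v:=u-\overrightarrow{l}=C_*\int_{\mathbb{R}^n}|x-y|^{\alpha-n}g(y)\,dy$ tends to $0$ at infinity; since $v$ is uniformly continuous, it suffices to place $v$ in some $L^q(\mathbb{R}^n)$ with $q<\infty$. In the case $1-|u|^2\in L^1$, the bound $|g|\le1-|u|^2$ upgrades $g$ to $L^1\cap L^\infty\subset L^p(\mathbb{R}^n)$ for every $p$; choosing $p\in(1,n/\alpha)$, which is nonempty since $\alpha<n$, the Hardy--Littlewood--Sobolev inequality gives $v\in L^q$ with $\tfrac1q=\tfrac1p-\tfrac{\alpha}{n}>0$, so $v\to0$ and $u\to\overrightarrow{l}$. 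Passing to the limit in $g=u(1-|u|^2)\to0$ gives $\overrightarrow{l}(1-|\overrightarrow{l}|^2)=0$, and since $|\overrightarrow{l}|\ge3/4$ here this forces $|\overrightarrow{l}|=1$, establishing (Rt-ii) for all $\alpha\in(0,n)$. In the case $u\in L^2$ with $\alpha<n/2$, already $g\in L^2$ lets Hardy--Littlewood--Sobolev place $v\in L^{2n/(n-2\alpha)}$, so $v\to0$; combined with $u\to0$ (uniform continuity together with $u\in L^2$) this forces $\overrightarrow{l}=0$, establishing (Rt-i).

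I expect the crux to be the passage from integral information to the pointwise decay $v(x)\to0$, and especially securing enough integrability of $g$ to run Hardy--Littlewood--Sobolev across the whole range $\alpha\in(0,n)$: the bare membership $g\in L^2$ only reaches $\alpha<n/2$, and it is the self-improvement $g\in L^1$, available once $1-|u|^2\in L^1$, that unlocks all $\alpha<n$ in (Rt-ii). The two structural inputs making this work are the uniform continuity of $u$ (which converts $L^q$ bounds into decay) and the connectedness of the exterior domain, which is where the hypothesis $n\ge2$ enters.
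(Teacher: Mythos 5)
Your proof is correct, and its overall architecture coincides with the paper's: bound the transition set $\{1/4\le|u|\le 3/4\}$, use connectedness of the exterior of a large ball (this is where $n\ge 2$ enters, exactly as in the paper), split into the two integrability regimes, and then identify $\overrightarrow{l}$ by showing the Riesz potential of $u(1-|u|^2)$ decays at infinity via Hardy--Littlewood--Sobolev, with the same exponent bookkeeping ($p=2$ reaching only $\alpha<n/2$, and $g\in L^1\cap L^\infty$ unlocking all $\alpha<n$). Where you genuinely diverge is in the execution of the decay step. The paper proves $v(x)\to 0$ for the potential $v$ of $|u|(1-|u|^2)$ through a layered-ball representation, an almost-mean-value inequality $v(x_0)<\varepsilon+Cv(x)$ on small balls, and an integration of $v^s$ over $B_\delta(x_0)$ using $v\in L^s$; you instead observe that $u-\overrightarrow{l}$ is itself uniformly continuous (inherited from the hypothesis on $u$) and lies in some $L^q$ with $q<\infty$, so it vanishes at infinity by the same elementary ``uniformly continuous plus $L^q$ implies decay'' lemma you already used for $g$. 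This is a real simplification: it removes the averaged inequality entirely and makes the single disjoint-balls mechanism do all the pointwise-decay work in the proof (boundedness of $S_*$, decay of $|u|$ in each regime, and decay of $u-\overrightarrow{l}$). The paper's more hands-on argument would be needed only if one lacked uniform continuity of $u$ itself and had to extract continuity of the potential from the kernel, which is not the situation here. Your identification of $|\overrightarrow{l}|$ at the end (via $\overrightarrow{l}(1-|\overrightarrow{l}|^2)=0$ together with $|\overrightarrow{l}|\ge 3/4$, rather than the paper's direct limits $|u|\to 0$ or $|u|\to 1$) is an equivalent cosmetic variant.
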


Next, we have a Liouville theorem for finite energy solutions.

\begin{theorem} \label{th1.4} (Liouville theorem)
Under the same assumption of Theorem \ref{th1.3}, if $u$ is also
differentiable, then

(Rt-iii) when (Rt-i) happens and $\alpha \in (1,n/2)$, we have $u \equiv \overrightarrow{0}$;

(Rt-iv) when (Rt-ii) happens and $\alpha \in (1,n)$, we have $u \equiv \overrightarrow{l}$.
\end{theorem}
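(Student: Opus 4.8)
The plan is to derive a single Pohozaev-type identity directly from the integral equation (\ref{ie}) and then read off both conclusions from it. Throughout write $f(u)=u(1-|u|^2)$ and $F(u)=\frac12|u|^2-\frac14|u|^4$, so that $\nabla_u F=f$ and $\int_{\mathbb{R}^n}f(u)\cdot u\,dx=\int_{\mathbb{R}^n}|u|^2(1-|u|^2)\,dx$ is finite by (\ref{cond2}). If $C_*=0$ then $u\equiv\overrightarrow{l}$ trivially, so assume $C_*\neq0$. Since $u$ is uniformly continuous and either $u\in L^2$ (case (Rt-i)) or $1-|u|^2\in L^1$ (case (Rt-ii)), the quantity $F(u(x))$ tends to a constant $F_\infty$ as $|x|\to\infty$, with $F_\infty=0$ in case (Rt-i) and $F_\infty=\frac14$ in case (Rt-ii); in both situations $F(u)-F_\infty\in L^1(\mathbb{R}^n)$. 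The goal is the master identity
\begin{equation*}
\frac{n-\alpha}{2}\int_{\mathbb{R}^n}f(u)\cdot(u-\overrightarrow{l})\,dx=n\int_{\mathbb{R}^n}\big(F(u)-F_\infty\big)\,dx .
\end{equation*}

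To prove it I would compute $\int_{B_R}(x\cdot\nabla u)\cdot f(u)\,dx$ in two ways. First, since $f=\nabla_u F$, the integrand equals $x\cdot\nabla[F(u)-F_\infty]$, and integration by parts gives $-n\int_{B_R}(F(u)-F_\infty)\,dx$ plus a boundary term $R\int_{\partial B_R}(F(u)-F_\infty)\,dS$. Second, differentiating the Riesz potential in (\ref{ie}) yields $\nabla u(x)=C_*(\alpha-n)\int_{\mathbb{R}^n}\frac{(x-y)f(u(y))}{|x-y|^{n-\alpha+2}}\,dy$, where the restriction $\alpha>1$ is exactly what makes the differentiated kernel $|x|^{\alpha-1-n}$ locally integrable; inserting this, applying Fubini, and symmetrizing in $x\leftrightarrow y$ (the symmetric average of $x\cdot(x-y)$ being $\frac12|x-y|^2$) converts the double Riesz integral into $\frac{\alpha-n}{2}\int f(u)\cdot(u-\overrightarrow{l})\,dx$ by (\ref{ie}). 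Equating the two expressions and letting $R\to\infty$ along a sequence on which the boundary contribution vanishes — one exists because $F(u)-F_\infty\in L^1$ forces $\liminf_R R\int_{\partial B_R}|F(u)-F_\infty|\,dS=0$ — gives the identity. This step is the main obstacle: justifying the pointwise gradient formula, the absolute convergence required for Fubini and for the symmetrization, and the vanishing of all boundary and truncation terms will demand decay estimates for $u-\overrightarrow{l}$ and $\nabla u$ obtained from the regularity theory of the Riesz potential together with the finite-energy hypotheses (\ref{cond1})--(\ref{cond2}).

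For (Rt-iii) we have $\overrightarrow{l}=\overrightarrow{0}$ and $F_\infty=0$. Writing $P=\int|u|^2$ and $Q=\int|u|^4$ (both finite since $u\in L^2$ and $|u|\le1$), the master identity reduces after rearrangement to $\alpha P=\frac{2\alpha-n}{2}Q$. Because $\alpha\in(1,n/2)$ gives $2\alpha-n<0$, the right-hand side is $\le0$ while $\alpha P\ge0$; hence $P=Q=0$ and $u\equiv\overrightarrow{0}$.

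For (Rt-iv) we have $|\overrightarrow{l}|=1$ and $F_\infty=\frac14$, so $F(u)-F_\infty=-\frac14(1-|u|^2)^2$. Put $\phi=1-|u|^2\ge0$. Using $|\overrightarrow{l}|=1$ one has the algebraic identity $|u|^2-\overrightarrow{l}\cdot u=\frac12(|u-\overrightarrow{l}|^2-\phi)$, whence $f(u)\cdot(u-\overrightarrow{l})=\phi\,(|u|^2-\overrightarrow{l}\cdot u)=\frac12\phi(|u-\overrightarrow{l}|^2-\phi)$. Substituting into the master identity and simplifying yields $(n-\alpha)\int_{\mathbb{R}^n}\phi\,|u-\overrightarrow{l}|^2\,dx=-\alpha\int_{\mathbb{R}^n}\phi^2\,dx$. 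Since $\alpha\in(1,n)$ makes the left-hand side $\ge0$ and the right-hand side $\le0$, both vanish; in particular $\int\phi^2=0$, so $\phi\equiv0$ by continuity, i.e. $|u|\equiv1$. Then $f(u)\equiv0$, and (\ref{ie}) collapses to $u\equiv\overrightarrow{l}$, completing the proof.
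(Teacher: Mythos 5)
Your strategy is, at bottom, the paper's: both proofs hinge on a Pohozaev-type identity extracted from (\ref{ie}), and your ``master identity'' is algebraically equivalent to the one the paper derives. Indeed, in case (Rt-iv) your exact identity $(n-\alpha)\int\phi\,|u-\overrightarrow{l}|^2dx=-\alpha\int\phi^2dx$ rearranges, via $\int\phi|u|^2=\int\phi-\int\phi^2$, into the paper's $(\alpha-\tfrac n2)\int\phi^2dx=(n-\alpha)\int(\overrightarrow{l}\cdot u-1)\phi\,dx$; your form is slightly cleaner since both sides are manifestly signed, so you avoid the paper's extra inequality $|u|-1\le\tfrac12(|u|^2-1)$. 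The derivations differ in one respect: you differentiate the Riesz kernel in $x$ and symmetrize the double integral in $x\leftrightarrow y$, whereas the paper differentiates $u(\lambda x)$ in $\lambda$ at $\lambda=1$ to get the pointwise identity (\ref{d}) and then integrates by parts in $z$ (their (\ref{geng})). These are two routes to the same identity. You also handle (Rt-iii) by the same identity, where the paper simply invokes the proof of Proposition \ref{th1.2} from \cite{LCL}; that unification is a genuine, if minor, improvement in presentation, and your sign argument ($\alpha P=\tfrac{2\alpha-n}{2}Q$ with $\alpha<n/2$) is correct.

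The gap is exactly the step you flag as ``the main obstacle'' and then do not carry out, and it is not a formality: it is where most of the paper's Section 3 lives. Concretely: (a) the pointwise gradient formula requires differentiating under the integral sign of a kernel that is only locally integrable because $\alpha>1$; (b) the symmetrization and Fubini step require control of $\iint |x\cdot(x-y)|\,|f(u(x))|\,|f(u(y))|\,|x-y|^{-(n-\alpha+2)}dx\,dy$, where the factor $x\cdot(x-y)$ grows linearly in $x$ --- this is precisely the paper's integral $I(\mathbb{R}^n)$ in (\ref{ji}), whose convergence must be established from $1-|u|^2\in L^1\cap L^\infty$ before the interchange in (\ref{g1}) is legitimate; and (c) the boundary terms must vanish along a subsequence $R_j\to\infty$, which the paper extracts from $1-|u|^2\in L^1$ as in (\ref{yi}), (\ref{bing}) and (\ref{ren}). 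In case (Rt-iii) the convergence question is more delicate still: $u\in L^2\cap L^\infty$ places $f(u)$ only in $L^p$ for $p\ge 2$, while a direct Hardy--Littlewood--Sobolev bound on the symmetrized double integral would want the exponent $2n/(n+\alpha)<2$, so the decay $|u(x)|\to 0$ from Theorem \ref{th2.1} (or the argument of \cite{LCL}) must enter. Until these estimates are supplied, the master identity --- and with it both conclusions --- is asserted rather than proved.
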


Finally, we give remarks on conditions (\ref{cond1}) and (\ref{cond2}).

\paragraph{Remark 1.1.} The condition (\ref{cond1}) shows the boundedness
which is a necessary condition in the Liouville theorem.
For (\ref{1.2}) with $\alpha=2$, (\ref{cond1}) was proved firstly by Brezis (cf.
\cite{Brezis}, where the author used the Keller-Osserman theory via Kato's inequality),
and independently by Ma (cf. \cite{Ma2}, where the maximum principle was applied).
When $\alpha \in (0,2)$ and $1-|u|^2 \in L^2(\mathbb{R}^{n})$,
Ma also proved (\ref{cond1}) by the Kato inequality (cf. \cite{Ma1}).

\paragraph{Remark 1.2.} Sometimes (\ref{cond2}) is called a finite energy
condition. For example, $u$ is a finite energy (i.e., $\nabla
u \in L^2(\mathbb{R}^{n})$) solution of (\ref{1.2}) with $\alpha=2$.
Multiplying (\ref{1.2}) by $u$ and integrating on $B_R:=B_R(0)$ yield
\begin{equation}\label{f3}
\int_{B_R}|u|^2(1-|u|^2)dx
=\int_{B_R}|\nabla u|^2dx-\int_{\partial B_R}u\partial_{\nu}u ds,
\end{equation}
where $\nu$ is the unit outwards norm vector on $\partial B_R$.
The Sobolev inequality implies $\nabla u \in L^2(\mathbb{R}^{n})
\Rightarrow u \in L^{2^*}(\mathbb{R}^{n})$. Therefore,
$$
R\int_{\partial B_R}(|u|^{2^*}+|\nabla u|^2) ds \to 0,
\quad when \ \ R=R_j \to \infty.
$$
Thus, by the H\"older inequality, when $R \to \infty$,
\begin{equation}\label{fs}
\begin{array}{ll}
&\displaystyle \left|\int_{\partial B_R}u\partial_{\nu}u ds\right|\\[3mm]
&\displaystyle \leq \left(R\int_{\partial B_R}|u|^{2^*}ds\right)^{\frac{1}{2^*}}
\left(R\int_{\partial B_R}|\nabla u|^{2}ds\right)^{\frac{1}{2}}
|\partial B_R|^{\frac{1}{2}-\frac{1}{2^*}}
R^{-\frac{1}{2}-\frac{1}{2^*}}\\[3mm]
& \to 0.
\end{array}
\end{equation}
Inserting this into (\ref{f3}) we see that
\begin{equation}\label{sf}
\|\nabla u\|_{L^{2}(\mathbb{R}^{n})}^2=
\int_{\mathbb{R}^{n}}|u|^2(1-|u|^2)dx,
\end{equation}
and hence (\ref{cond2}) holds true.

On the contrary, if (\ref{cond2}) holds, then $\nabla u \in L^{2}(\mathbb{R}^{n})$.
In fact, take $\zeta_R$ as the cut-off function satisfying $\zeta_R(x)=1$ when $|x|
\leq R/2$ and $\zeta_R(x)=0$ when $|x| \geq R$. Multiplying (\ref{1.2})
by $u\zeta_R^2$ and integrating on $B_R$, we get
$$
\int_{B_R} \zeta_R^2|\nabla u|^2dx
=-2\int_{B_R}(u\nabla u)\cdot(\zeta_R\nabla \zeta_R)dx
+\int_{B_R}\zeta_R^2|u|^2(1-|u|^2)dx.
$$
Using the Cauchy-Schwarz inequality and (\ref{cond2}), we have
$$
\int_{B_R} \zeta_R^2|\nabla u|^2dx \leq C,
$$
where $C>0$ is independent of $R$. Letting $R \to \infty$ yields $\nabla u \in L^{2}(\mathbb{R}^{n})$.
Moreover, by the same argument above, we also have (\ref{fs}), and
hence (\ref{sf}) still holds true.

\section{Proof of Theorem \ref{th1.3}}

\begin{theorem} \label{th2.1}
Under the same assumptions of Theorem \ref{th1.3}, we have

(i) either $u \in L^2(\mathbb{R}^{n})$ and $\lim_{|x| \to \infty}|u(x)|=0$;

(ii) or $1-|u|^2 \in L^1(\mathbb{R}^{n})$ and $\lim_{|x| \to \infty}|u(x)|=1$.
\end{theorem}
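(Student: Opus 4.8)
The plan is to observe that Theorem \ref{th2.1} is a \emph{soft} consequence of the hypotheses (\ref{cond1}) and (\ref{cond2}) together with the uniform continuity of $u$, and does not actually invoke the integral equation (\ref{ie}); the equation is reserved for pinning down $|\overrightarrow{l}|$ afterwards. Set $g(x):=|u(x)|^2(1-|u(x)|^2)$. By (\ref{cond1}) the scalar $|u(x)|$ lies in $[0,1]$, so $g\geq 0$; since $t\mapsto t^2(1-t^2)$ is Lipschitz on $[0,1]$ and $|u|$ is uniformly continuous (as $u$ is), $g$ is uniformly continuous on $\mathbb{R}^n$; and by (\ref{cond2}), $g\in L^1(\mathbb{R}^n)$. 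First I would record the standard fact that a nonnegative, uniformly continuous, integrable function on $\mathbb{R}^n$ must vanish at infinity: if $g(x_j)\geq\varepsilon$ along some sequence $|x_j|\to\infty$, uniform continuity forces $g\geq\varepsilon/2$ on a fixed-radius ball about each $x_j$, and a disjoint subsequence of such balls contradicts $\int_{\mathbb{R}^n}g<\infty$. Hence $\lim_{|x|\to\infty}g(x)=0$.

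The heart of the argument is to upgrade this to the clean dichotomy, ruling out oscillation of $|u|$ between the two zeros $0$ and $1$ of $t^2(1-t^2)$. Fix thresholds $0<a<\tfrac12<b<1$ and let $\eta:=\min_{t\in[a,b]}t^2(1-t^2)>0$, which is positive because $[a,b]$ is bounded away from the two zeros. Since $g\to 0$, there is $R$ with $g(x)<\eta$ for all $|x|>R$, and consequently $|u(x)|\in[0,a)\cup(b,1]$ on the exterior $\{|x|>R\}$. Because $n\geq 2$, this exterior region is connected, and the continuous map $x\mapsto|u(x)|$ sends it into the disconnected set $[0,a)\cup(b,1]$; therefore its image lies entirely in $[0,a)$ or entirely in $(b,1]$. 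This topological step is exactly where the assumption $n\geq 2$ is indispensable, and I expect it to be the main obstacle: for $n=1$ the exterior $\{|x|>R\}$ splits into two rays and one must argue on each separately (the conclusion survives, since each ray is connected and $g\to 0$ still forces a single limit there).

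With the dichotomy in hand the two cases are immediate. In the first alternative $|u|<a$ on $\{|x|>R\}$, so $1-|u|^2\geq 1-a^2>0$ is bounded below and $g\to 0$ forces $|u|^2\to 0$, i.e. $\lim_{|x|\to\infty}|u(x)|=0$. In the second alternative $|u|>b$, so $|u|^2\geq b^2>0$ is bounded below and $g\to 0$ forces $1-|u|^2\to 0$, i.e. $\lim_{|x|\to\infty}|u(x)|=1$.

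Finally I would convert each limit into the stated integrability. In the case $|u(x)|\to 0$, choose $R$ with $|u|<\tfrac12$ on $\{|x|>R\}$; there $1-|u|^2>\tfrac34$, hence $|u|^2<\tfrac43\,g$ and $\int_{|x|>R}|u|^2\,dx\leq\tfrac43\int_{\mathbb{R}^n}g\,dx<\infty$; combined with $\int_{|x|\leq R}|u|^2\,dx<\infty$ (boundedness of $u$ on the ball) this gives $u\in L^2(\mathbb{R}^n)$, which is alternative (i). In the case $|u(x)|\to 1$, choose $R$ with $|u|>\tfrac12$ on $\{|x|>R\}$; there $|u|^2>\tfrac14$, hence $1-|u|^2<4g$ and $\int_{|x|>R}(1-|u|^2)\,dx\leq 4\int_{\mathbb{R}^n}g\,dx<\infty$; since $0\leq 1-|u|^2\leq 1$ on the ball, this gives $1-|u|^2\in L^1(\mathbb{R}^n)$, which is alternative (ii). This exhausts the two possibilities and completes the proof.
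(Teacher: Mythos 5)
Your proof is correct and follows essentially the same route as the paper: a disjoint-balls argument combining uniform continuity with (\ref{cond2}) to show $|u|$ cannot stay between its two "good" values near infinity, connectedness of the exterior of a ball for $n\geq 2$ to obtain the dichotomy, and the lower bound on the complementary factor to convert $|u|^2(1-|u|^2)\in L^1(\mathbb{R}^n)$ into the stated integrability. The only cosmetic difference is that you extract the pointwise limits directly from $g(x)\to 0$ at infinity, whereas the paper derives them afterwards from $u\in L^2(\mathbb{R}^n)$ (resp.\ $1-|u|^2\in L^1(\mathbb{R}^n)$) by a second uniform-continuity contradiction; both are fine.
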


\begin{proof}
Here an idea in Section 3.2 of \cite{BMR} was used.

Set $S_*=\{x \in \mathbb{R}^n; \frac{1}{4} \leq |u| \leq \frac{3}{4}\}$.
We claim that there exists
suitably large $R_0>0$ such that $S_* \subset B_{R_0}(0)$.

Otherwise, we can find a sequence $\{x_j\} \subset S_*$ satisfies $\lim_{j \to
\infty}x_j=\infty$. Since $u$ is uniformly continuous, there exists
$\eta \in (0,1)$ such that
$$
\frac{1}{8} \leq |u(x)| \leq \frac{7}{8},
\quad when \ |x-x_j|<\eta, \ \forall j.
$$
Choose a subsequence of $\{x_j\}$ (still denoted by itself) satisfying
$|x_i-x_j|>3\eta$ for $i \neq j$. Therefore,
$$
\int_{\cup_j B(x_j,\eta)}|u|^2(1-|u|^2)dx \geq
C|\cup_j B(x_j,\eta)|=\infty,
$$
which contradicts with (\ref{cond2}). The claim is proved.

Since
$u$ is uniformly continuous and $\mathbb{R}^n\setminus B_{R_0}(0)$ is connected,
either $|u| \leq 1/4$ or $|u| \geq 3/4$ holds true on
$\mathbb{R}^n\setminus B_{R_0}(0)$.

When $|u| \leq 1/4$ on $\mathbb{R}^n \setminus B_{R_0}(0)$, it is led
to $1-|u|^2 \geq 15/16$ on $\mathbb{R}^n \setminus
B_{R_0}(0)$. Thus,
$$\begin{array}{ll}
&\displaystyle
\int_{\mathbb{R}^n}|u|^2dx
=\int_{B_{R_0}(0)}|u|^2dx+\int_{\mathbb{R}^n \setminus B_{R_0}(0)}|u|^2dx\\[3mm]
&\displaystyle \leq |B_{R_0}| +\frac{16}{15}\int_{\mathbb{R}^n
\setminus B_{R_0}(0)}|u|^2(1-|u|^2)dx <\infty,
\end{array}
$$
by virtue of (\ref{cond1}) and (\ref{cond2}). Namely, $u \in L^2(\mathbb{R}^n)$.

Now, we claim that
\begin{equation}\label{asy1}
\lim_{|x| \to \infty}|u(x)|=0.
\end{equation}

Otherwise, we can find $\epsilon_0>0$ and $x_j \to \infty$ such that
$$
|u(x_j)| \geq 2\epsilon_0.
$$
Since $u$ is uniformly continuous, there exists $\eta>0$, such that
$$
|u(x)-u(y)|<\epsilon_0, \quad when \ |x-y|<\eta.
$$
Therefore, for $x \in B_\eta(x_j)$, $|u(x)|>|u(x_j)|-\epsilon_0 \geq \epsilon_0$.
Thus,
$$
\int_{B_\eta(x_j)}|u(x)|^2dx>\epsilon_0^2|B_\eta|.
$$
This contradicts with $u \in L^2(\mathbb{R}^n)$.

When $|u| \geq 3/4$ on $\mathbb{R}^n\setminus B_{R_0}(0)$, by the same
argument above, we can see firstly
$$
\int_{\mathbb{R}^n}(1-|u|^2) dx <\infty
$$
in view of (\ref{cond1}) and (\ref{cond2}). This is $1-|u|^2
\in L^1(\mathbb{R}^n)$. Second, since $u$ is uniformly continuous, there also holds
\begin{equation}\label{asy2}
\lim_{|x| \to \infty}|u(x)|=1.
\end{equation}
The proof of Theorem \ref{th2.1} is complete.
\end{proof}

\begin{theorem} \label{th2.2}
Under the same assumptions of Theorem \ref{th1.3}, we have

(i) when $u \in L^2(\mathbb{R}^{n})$ and $\alpha \in (0,n/2)$, then $|\overrightarrow{l}|=0$;

(ii) when $1-|u|^2 \in L^1(\mathbb{R}^{n})$ and $\alpha \in (0,n)$, then $|\overrightarrow{l}|=1$.
\end{theorem}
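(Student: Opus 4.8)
The plan is to pass to the limit $|x|\to\infty$ directly in the integral equation (\ref{ie}). Writing $g(y):=u(y)(1-|u(y)|^2)$ and $I(x):=C_*\int_{\mathbb{R}^n}|x-y|^{\alpha-n}g(y)\,dy$, the equation reads $u(x)=\overrightarrow{l}+I(x)$. If I can establish that $\lim_{|x|\to\infty}I(x)=0$, then $|u(x)-\overrightarrow{l}|=|I(x)|\to 0$, so $u(x)\to\overrightarrow{l}$ and hence $|\overrightarrow{l}|=\lim_{|x|\to\infty}|u(x)|$. Theorem \ref{th2.1} then forces $|\overrightarrow{l}|=0$ in case (i) and $|\overrightarrow{l}|=1$ in case (ii). Thus the entire statement reduces to the single assertion that the Riesz potential $I(x)$ vanishes at infinity, and I would carry out this estimate separately in the two cases.

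First I would record the integrability and decay of $g$ that each case provides. Since $|u|\le 1$ we always have $|g|\le 1$, and Theorem \ref{th2.1} gives the pointwise decay $|g(x)|=|u(x)|(1-|u(x)|^2)\to 0$ as $|x|\to\infty$. In case (i), $1-|u|^2\le 1$ yields $|g|\le|u|$, so $g\in L^2(\mathbb{R}^n)$; in case (ii), $|u|\le 1$ yields $|g|\le 1-|u|^2$, so $g\in L^1(\mathbb{R}^n)$. With this in hand I would set $r=|x|$ and split $\mathbb{R}^n=A_1\cup A_2\cup B$, where $A_1=\{|x-y|<1\}$, $A_2=\{1\le|x-y|<r/2\}$, and $B=\{|x-y|\ge r/2\}$. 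On $A_2$ and $B$ one has $|y|>r/2$, so those pieces lie in the far field where $g$ is small; $A_1$ isolates the kernel singularity.

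On $A_1$ the estimate $\int_{A_1}|x-y|^{\alpha-n}|g(y)|\,dy\le\big(\sup_{B_1(x)}|g|\big)\int_{|z|<1}|z|^{\alpha-n}dz$ works because $\alpha>0$ makes the $z$-integral finite, while the supremum tends to $0$ as $r\to\infty$ by the pointwise decay of $g$. On $B$, where the kernel is bounded by $(r/2)^{\alpha-n}$, case (ii) gives $\int_B\le Cr^{\alpha-n}\|g\|_{L^1}\to 0$ since $\alpha<n$, whereas case (i) requires Cauchy--Schwarz, $\int_B\le\|g\|_{L^2}\big(\int_{|z|\ge r/2}|z|^{2(\alpha-n)}dz\big)^{1/2}=C\|g\|_{L^2}\,r^{\alpha-n/2}\to 0$, where both the convergence of the kernel integral and its decay demand $\alpha<n/2$. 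On $A_2$, case (ii) bounds the kernel by $1$ and uses the $L^1$ tail $\int_{|y|>r/2}|g|\to 0$, while case (i) again applies Cauchy--Schwarz, controlling $\int_{1\le|z|<r/2}|z|^{2(\alpha-n)}dz$ by a finite constant (here $\alpha<n/2$ keeps the exponent below $-1$) times the vanishing $L^2$ tail $\|g\|_{L^2(\{|y|>r/2\})}$. Summing the three pieces gives $I(x)\to 0$.

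I expect the main obstacle to be the exponent bookkeeping in case (i): the singular kernel forces the peeling-off of $A_1$ (resolved by the decay of $g$ rather than by integrability), and the crucial point is that the far-field estimates on $A_2$ and $B$ succeed only because $\alpha<n/2$ is exactly the Cauchy--Schwarz threshold guaranteeing $|z|^{2(\alpha-n)}$ is integrable near infinity. This explains why case (i) is restricted to $\alpha\in(0,n/2)$ while the stronger $L^1$ control available in case (ii) lets the same three estimates close for the full range $\alpha\in(0,n)$.
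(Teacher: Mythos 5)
Your proposal is correct, and it reaches the key fact --- that the Riesz potential of $g=u(1-|u|^2)$ vanishes at infinity, after which Theorem \ref{th2.1} pins down $|\overrightarrow{l}|$ --- by a genuinely different route from the paper. The paper introduces $v(x)=\frac{1}{n-\alpha}\int_{\mathbb{R}^n}|x-y|^{\alpha-n}|u|(1-|u|^2)\,dy$, rewrites it as $\int_0^\infty t^{\alpha-n-1}\int_{B_t(x)}|u|(1-|u|^2)\,dy\,dt$, proves the quasi-mean-value inequality $v(x_0)<\varepsilon+Cv(x)$ for $|x-x_0|<\delta$, and then averages $v^s$ over $B_\delta(x_0)$, using the Hardy--Littlewood--Sobolev inequality to get $v\in L^s(\mathbb{R}^n)$ for a suitable $s>1$ (with $s=\frac{2n}{n-2\alpha}$ in case (i), which is where $\alpha<n/2$ enters); the vanishing of $\int_{B_\delta(x_0)}v^s$ as $|x_0|\to\infty$ then forces $v(x_0)\to 0$. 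You instead split $\mathbb{R}^n$ into $\{|x-y|<1\}$, $\{1\le|x-y|<|x|/2\}$, $\{|x-y|\ge|x|/2\}$ and close each piece with elementary kernel bounds plus Cauchy--Schwarz (case (i)) or the $L^1$ tail (case (ii)); the exponent bookkeeping you carry out is sound, and your observation that $\alpha<n/2$ is exactly the integrability threshold for $|z|^{2(\alpha-n)}$ at infinity is the same constraint that makes the paper's HLS exponent admissible. The trade-offs: your argument avoids HLS and the averaging trick entirely and is more transparent, but it imports the pointwise decay of $g$ from Theorem \ref{th2.1} to handle the singular region $\{|x-y|<1\}$ (legitimate, since Theorem \ref{th2.1} is independent, and since $u\in L^2$ rules out the alternative $|u|\to 1$, and vice versa), whereas the paper's estimate of the singular part uses only $|g|\le C$ and is self-contained. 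One harmless slip: on $B=\{|x-y|\ge|x|/2\}$ it is not true that $|y|>|x|/2$ (points near the origin lie in $B$), but your actual estimate there uses only the kernel bound $(|x|/2)^{\alpha-n}$ and a global norm of $g$, so nothing is lost.
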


\begin{proof}
Set
\begin{equation}\label{v-def}
v(x)=\frac{1}{n-\alpha}
\int_{\mathbb{R}^{n}}\frac{|u(y)|(1-|u(y)|^{2})}{|x-y|^{n-\alpha}}dy.
\end{equation}
By exchanging the integral variants, we can get
$$
v(x)=\int_0^{\infty}
\frac{\int_{B_t(x)}|u(y)|[1-|u(y)|^2]dy}{t^{n-\alpha}}
\frac{dt}{t}.
$$
Once there holds
\begin{equation}\label{rest}
\lim_{|x| \to \infty}v(x)=0,
\end{equation}
from (\ref{ie}) it follows that
$$
\lim_{|x| \to \infty}u(x)=\overrightarrow{l}.
$$
By (\ref{asy1}) and (\ref{asy2}), the proof of Theorem \ref{th2.2} is complete.

\textbf{Proof of (\ref{rest}).}
Take $x_0 \in \mathbb{R}^n$. By (\ref{cond1}), we know that
$\forall \varepsilon>0$, there exists
$\delta \in (0,1)$ such that
$$
\int_0^{\delta}
\frac{\int_{B_t(x_0)}|u(z)|[1-|u(z)|^2]dz}{t^{n-\alpha}}
\frac{dt}{t} \leq C
\int_0^{\delta} t^{\alpha} \frac{dt}{t}
<\varepsilon.
$$
When $|x-x_0|<\delta$, $B_t(x_0) \subset B_{t+\delta}(x)$. Therefore,
$$\begin{array}{ll}
&\displaystyle \int_{\delta}^{\infty}
\frac{\int_{B_t(x_0)}|u(z)|[1-|u(z)|^2]dz}{t^{n-\alpha}}
\frac{dt}{t}\\[3mm]
& \leq C\displaystyle\int_{\delta}^{\infty}
\frac{\int_{B_{t+\delta}(x)}|u(z)|[1-|u(z)|^2]dz}{(t+\delta)^{n-\alpha}}
(\frac{t+\delta}{t})^{n-\alpha+1} \frac{d(t+\delta)}{t+\delta}\\[3mm]
&\displaystyle \leq C2^{n-\alpha+1}\int_0^{\infty}
\frac{\int_{B_t(x)}|u(z)|[1-|u(z)|^2]dz}{t^{n-\alpha}}
\frac{dt}{t}\\[3mm]
&\leq Cv(x).
\end{array}
$$
Combining two estimates above, we get
$$
v(x_0)<\varepsilon+Cv(x), \quad for \quad |x-x_0| <\delta.
$$
Thus, for any $s>1$,
\begin{equation}
\begin{array}{ll}
v^{s}(x_0)
&=|B_\delta(x_0)|^{-1}\displaystyle\int_{B_\delta(x_0)}
v^{s}(x_0)dx\\[3mm] &\leq
C\varepsilon^{s}
+C|B_\delta(x_0)|^{-1}\displaystyle\int_{B_\delta(x_0)}v^{s}(x)dx.
\end{array}
\label{4.15}
\end{equation}

We claim that
\begin{equation}\label{v-int}
v \in L^{s}(\mathbb{R}^n) \ for \  some \ s>1.
\end{equation}

In fact, using the Hardy-Littlewood-Sobolev inequality to (\ref{v-def})
yields
$$
\|v\|_{L^s(\mathbb{R}^n)} \leq
C\|u(1-|u|^2)\|_{L^{\frac{ns}{n+s\alpha}}(\mathbb{R}^n)}.
$$
When $u \in L^2(\mathbb{R}^n)$, we can choose $s=\frac{2n}{n-2\alpha}$ (in view
of $0<\alpha<n/2$).
When $1-|u|^2 \in L^1(\mathbb{R}^n)$, we can choose some $s>\frac{n}{n-\alpha}$.
Thus, (\ref{v-int}) is easy to prove by (\ref{cond1}).

In view of (\ref{v-int}), when $|x_0| \to \infty$,
$$
\int_{B_\delta(x_0)}v^{s}(x)dx \to 0.
$$
Inserting this result into (\ref{4.15}), we have
$$
\lim_{|x_0| \to \infty}v^{s}(x_0)=0.
$$
This result means that (\ref{rest}) holds.
\end{proof}

\section{Proof of Theorem \ref{th1.4}}
\begin{proof}
\textbf{Proof of (Rt-iii).}
When (Rt-i) holds true, $u$ solves
$$
u(x)=C_*\int_{\mathbb{R}^{n}}\frac{u(1-|u|^{2})}{|x-y|^{n-\alpha}}dy.
$$
By the same proof of Proposition \ref{th1.2}, we also obtain $u \equiv \overrightarrow{0}$.

\textbf{Proof of (Rt-iv).}

For convenience, we denote $B_R(0)$ by $B_R$ here.

{\it Step 1.} We claim that the  improper integral
\begin{equation}\label{jia}
\int_{\mathbb{R}^{n}}\frac{z\cdot\nabla[u(z)(1-|u(z)|^{2})]}{|x-z|^{n-\alpha}}dz
\end{equation}
is convergent at each $x \in \mathbb{R}^n$.

In fact, since $1-|u|^2 \in L^1(\mathbb{R}^n)$, we can find $R=R_j \to \infty$
such that
\begin{equation}\label{yi}
R\int_{\partial B_{R}}(1-|u(z)|^{2})ds \to 0.
\end{equation}
By (\ref{cond1}), we obtain that for sufficiently large $R$, there holds
$$
R\left|\int_{\partial B_R} \frac{u(z)(1-|u(z)|^{2})}{|x-z|^{n-\alpha}}ds \right|
\leq CR^{1-n+\alpha} \int_{\partial B_R}(1-|u(z)|^2)ds.
$$
Letting $R=R_j \to \infty$ and using (\ref{yi}) we get
\begin{equation}\label{bing}
R\int_{\partial B_R} \frac{u(z)(1-|u(z)|^{2})}{|x-z|^{n-\alpha}}ds
\to 0
\end{equation}
when $R=R_j \to \infty$.

Next, we claim that the improper integral
\begin{equation}\label{ji}
I(\mathbb{R}^n):=\int_{\mathbb{R}^n} \frac{u(z)(1-|u(z)|^{2})(x-z)\cdot z}
{|x-z|^{n-\alpha+2}}dz
\end{equation}
absolutely converges for each $x \in \mathbb{R}^n$.

In fact, we observe that the defect points of $I(\mathbb{R}^n)$ are $x$ and $\infty$.
When $z$ is near $\infty$, we have
$$\begin{array}{ll}
|I(\mathbb{R}^n\setminus B_r)|
&\displaystyle \leq C\int_{\mathbb{R}^n\setminus B_r}\frac{1-|u(z)|^2}{|x-z|^{n-\alpha}}dz\\[3mm]
&\displaystyle \leq C\left(\int_{\mathbb{R}^n}(1-|u|^2)^sdz\right)^{\frac{1}{s}}
\left(\int_r^\infty \rho^{n-\frac{s}{s-1}(n-\alpha)} \frac{d\rho}{\rho}\right)^{1-\frac{1}{s}}.
\end{array}
$$
Here $s \in (1,n/\alpha)$.
In view of $1-|u|^2 \in L^1(\mathbb{R}^n)$ and (\ref{cond1}), we get $1-|u|^2 \in L^s(\mathbb{R}^n)$
for all $s \geq 1$. Therefore,
\begin{equation}\label{xin}
|I(\mathbb{R}^n\setminus B_r)|<\infty.
\end{equation}
When $z$ is near $x$, by (\ref{cond1}) and $\alpha>1$,
$$
|I(B_\delta(x))| \leq
C\int_{B_\delta(x)}\frac{dz}{|x-z|^{n-\alpha+1}} \leq
C\int_0^\delta \rho^{\alpha-1} \frac{d\rho}{\rho}<\infty.
$$
Combining this with (\ref{xin}), we prove that (\ref{ji}) is absolutely convergent.

Finally we prove that (\ref{jia}) is convergent. Integrating by parts yields
\begin{equation}\label{geng}
\begin{array}{ll}
&\displaystyle\int_{B_R}\frac{z\cdot\nabla[u(z)(1-|u(z)|^{2})]}{|x-z|^{n-\alpha}}dz\\[3mm]
&\displaystyle =R\int_{\partial B_R} \frac{u(z)(1-|u(z)|^{2})}{|x-z|^{n-\alpha}}ds\\[3mm]
&\quad \displaystyle -n\int_{B_R}\frac{u(z)(1-|u(z)|^2)}{|x-z|^{n-\alpha}}dz\\[3mm]
&\quad \displaystyle -(n-\alpha)\int_{B_R} \frac{u(z)(1-|u(z)|^{2})(x-z)\cdot z}{|x-z|^{n-\alpha+2}}dz.
\end{array}
\end{equation}
Letting $R=R_j \to \infty$ in (\ref{geng}) and using (\ref{ie}) and (\ref{bing}),
we can see that
$$
\int_{\mathbb{R}^n}\frac{z\cdot\nabla[u(z)(1-|u(z)|^{2})]}{|x-z|^{n-\alpha}}dz
=-\frac{n}{C_*}\left(u(x)-\overrightarrow{l}\right)+(\alpha-n)I(\mathbb{R}^n),
$$
and hence it is convergent at each $x \in \mathbb{R}^n$.

{\it Step 2.} Proof of (Rt-iv).

For any $\lambda>0$, from (\ref{ie}) it follows
$$
u(\lambda x)=\overrightarrow{l}+C_*\lambda^{\alpha}\int_{\mathbb{R}^{n}}\frac{u(\lambda z)
(1-|u(\lambda z)|^{2})}{|x-z|^{n-\alpha}}dz.
$$
Differentiating both sides with respect to $\lambda$ yields
$$\begin{array}{ll}
&\quad x\cdot\nabla u(\lambda x)\\[3mm]
&=\displaystyle C_*\alpha\lambda^{\alpha-1}\int_{\mathbb{R}^{n}}\frac{u(\lambda z)
(1-|u(\lambda z)|^{2})}{|x-z|^{n-\alpha}}dz\\[3mm]
&\quad +\displaystyle C_*\lambda^{\alpha}\int_{\mathbb{R}^{n}}\frac{(z\cdot \nabla u(\lambda z))
(1-|u(\lambda z)|^{2})+u(\lambda z)
[-2u(\lambda z)(z\cdot \nabla  u(\lambda z))]}{|x-z|^{n-\alpha}}dz.
\end{array}
$$
Letting $\lambda=1$ yields
\begin{equation}\label{d}
x\cdot\nabla u(x)
=\alpha \left(u(x)-\overrightarrow{l}\right)
+C_*\int_{\mathbb{R}^{n}}\frac{z\cdot\nabla[u(z)(1-|u(z)|^{2})]}{|x-z|^{n-\alpha}}dz.
\end{equation}

Integrating by parts, we get
$$\begin{array}{ll}
&\displaystyle \int_{B_R}u(1-|u|^{2})(x\cdot\nabla u)dx\\[3mm]
&\displaystyle =-\frac{1}{4} \int_{B_R} x\cdot \nabla[(1-|u|^2)^2]dx\\[3mm]
&\displaystyle =\frac{n}{4}\int_{B_R}(1-|u|^2)^2dx
-\frac{R}{4}\int_{\partial B_{R}}(1-|u|^{2})^2 ds.
\end{array}
$$
Since (\ref{cond1}) and (\ref{yi}), it
follows that
\begin{equation}\label{ren}
R\int_{\partial B_{R}}(1-|u|^{2})^2 ds \to 0
\end{equation}
for some $R=R_j \to \infty$. Thus, by virtue of
(\ref{cond1}) and $1-|u|^2 \in L^1(\mathbb{R}^{n})$,
\begin{equation}\label{gui}
\int_{\mathbb{R}^{n}}u(x)(1-|u(x)|^{2})(x\cdot\nabla u(x))dx
=\frac{n}{4}\int_{\mathbb{R}^{n}}(1-|u(x)|^2)^2dx<\infty.
\end{equation}

From (\ref{cond1}) and $1-|u|^2 \in L^1(\mathbb{R}^{n})$, it
also follows that
$$
\left|\int_{\mathbb{R}^{n}}[u(x)\cdot
(u(x)-\overrightarrow{l})](1-|u(x)|^{2})dx\right| < \infty.
$$
Multiply (\ref{d}) by $u(x)(1-|u(x)|^{2})$ and integrate over
$B_R$. Letting $R=R_j \to \infty$, from the result above and
(\ref{gui}), we get
$$
\left|
\int_{\mathbb{R}^{n}}u(x)(1-|u(x)|^{2})
\int_{\mathbb{R}^{n}}\frac{z\cdot\nabla[u(z)(1-|u(z)|^{2})]}{|x-z|^{n-\alpha}}dzdx
\right| <\infty,
$$
and
\begin{equation}\label{g}
\begin{array}{ll}
&\quad\displaystyle\int_{\mathbb{R}^{n}}u(x)(1-|u(x)|^{2})(x\cdot\nabla u(x))dx\\[3mm]
&\quad -\displaystyle\alpha\int_{\mathbb{R}^{n}}[u(x)\cdot
(u(x)-\overrightarrow{l})](1-|u(x)|^{2})dx\\[3mm]
&=\displaystyle C_*\int_{\mathbb{R}^{n}}u(x)(1-|u(x)|^{2})
\int_{\mathbb{R}^{n}}\frac{z\cdot\nabla[u(z)(1-|u(z)|^{2})]}{|x-z|^{n-\alpha}}dzdx.
\end{array}
\end{equation}

We use the Fubini theorem and (\ref{ie}) to handle the term of the right hand
side. Thus,
\begin{equation}\label{g1}
\begin{array}{ll}
&\quad \displaystyle C_*\int_{\mathbb{R}^{n}}u(x)(1-|u(x)|^{2})\int_{\mathbb{R}^{n}}
\frac{z\cdot\nabla[u(1-|u|^{2})]}{|x-z|^{n-\alpha}}dzdx\\[3mm]
&=\displaystyle C_*\int_{\mathbb{R}^{n}}z\cdot\nabla[u(z)(1-|u(z)|^{2})]
\int_{\mathbb{R}^{n}}\frac{u(x)(1-|u(x)|^{2})}{|x-z|^{n-\alpha}}dxdz\\[3mm]
&=\displaystyle\int_{\mathbb{R}^{n}}(x\cdot\nabla[u(x)(1-|u(x)|^{2})])(u(x)-\overrightarrow{l})dx\\[3mm]
&=\displaystyle\int_{\mathbb{R}^n} (u(x)-\overrightarrow{l})(1-|u(x)|^2)](x\cdot \nabla u(x))dx\\[3mm]
&\quad \displaystyle +\int_{\mathbb{R}^n}u(x)(u(x)-\overrightarrow{l})[x\cdot \nabla(1-|u(x)|^2)]dx.
\end{array}
\end{equation}
Inserting this result into (\ref{g}), we
have
\begin{eqnarray*}
0&=&\alpha \int_{\mathbb{R}^n}u(x)(u(x)-\overrightarrow{l})(1-|u(x)|^2) dx\\
&&-\int_{\mathbb{R}^n} \overrightarrow{l}(1-|u(x)|^2)(x\cdot \nabla u(x))dx\\
&&-\int_{\mathbb{R}^n}u(x)(u(x)-\overrightarrow{l})[x\cdot \nabla(|u(x)|^2-1)]dx\\
&:=&I+II+III.
\end{eqnarray*}

We deal with the first and the third terms of the right hand side.

Noting $|\overrightarrow{l}|=1$, we have
\begin{eqnarray*}
I&=&\alpha \int_{\mathbb{R}^n}(u+\overrightarrow{l})(u-\overrightarrow{l})(1-|u|^2)dx\\
&&-\alpha \int_{\mathbb{R}^n}\overrightarrow{l}(u-\overrightarrow{l})(1-|u|^2)dx\\
&=&-\alpha \int_{\mathbb{R}^n}(|u|^2-1)^2dx+\alpha \int_{\mathbb{R}^n}(1-|u|^2)dx\\
&&-\alpha \int_{\mathbb{R}^n}\overrightarrow{l}u(1-|u|^2)dx.
\end{eqnarray*}
Next, integrating by parts, we obtain
\begin{eqnarray*}
III&=&-\int_{\mathbb{R}^n}(u+\overrightarrow{l})(u-\overrightarrow{l})[x\cdot \nabla(|u|^2-1)]dx\\
&&+\int_{\mathbb{R}^n}\overrightarrow{l}(u-\overrightarrow{l})[x\cdot \nabla(|u|^2-1)]dx\\
&=& -\int_{\mathbb{R}^n}(|u|^2-1)[x\cdot \nabla(|u|^2-1)]dx\\
&&-\overrightarrow{l}\int_{\mathbb{R}^n}(|u|^2-1)\nabla \cdot[x (u-\overrightarrow{l})]dx\\
&=&\frac{n}{2}\int_{\mathbb{R}^n} (|u|^2-1)^2dx+ \overrightarrow{l}\int_{\mathbb{R}^n}(1-|u|^2)(x\cdot \nabla u)dx\\
&&+n\overrightarrow{l}\int_{\mathbb{R}^n}(u-\overrightarrow{l})(1-|u|^2)dx.
\end{eqnarray*}

Substituting these results into $I+II+III=0$, we get
\begin{eqnarray*}
&&(\alpha-\frac{n}{2})\int_{\mathbb{R}^n} (|u|^2-1)^2dx\\
&=& \int_{\mathbb{R}^n}(-\alpha \overrightarrow{l}u+\alpha+n\overrightarrow{l}u-n)(1-|u|^2)dx\\
&=&\int_{\mathbb{R}^n}(n-\alpha )(\overrightarrow{l} u-1)(1-|u|^2)dx\\
&\leq&(n-\alpha )\int_{\mathbb{R}^n}(|u|-1)(1-|u|^2)dx.
\end{eqnarray*}
In view of $|u|-1 \leq \frac{1}{2}(|u|^2-1)$, it follows that
$$
(\alpha-\frac{n}{2})\int_{\mathbb{R}^n} (|u|^2-1)^2dx
\leq \frac{n-\alpha}{2} \int_{\mathbb{R}^n}(|u|^2-1)(1-|u|^2)dx,
$$
which implies
$|u| \equiv 1$ a.e. on $\mathbb{R}^n$. Inserting this into \eqref{ie},
we see that $u \equiv \overrightarrow{l}$ and hence (Rt-iv) is proved.
\end{proof}

\paragraph{Acknowledgements.}
This research was supported by NNSF (11871278) of China.


Yutian Lei

Jiangsu Key Laboratory for NSLSCS,
School of Mathematical Sciences,

Nanjing Normal University,
Nanjing, 210023, China

\vskip 3mm

Xin Xu

Depart. of Math., Southern University of Science and Technology,

Shenzhen, 518055, China

Faculty of Science and Technology, University of Macau, Macau,
China

\end{document}